\documentclass{article}
\usepackage{amsmath, amssymb, amsthm}
\usepackage{caption} 
\usepackage{hyperref}
\usepackage{setspace} 
\usepackage{tikz}
\usetikzlibrary{decorations.pathreplacing} 

\usepackage{biblatex}
\addbibresource{references.bib}

\newtheorem{theorem}{Theorem}

\begin{document}

\title{A Proof Without Words: Triangles in the Triangular Grid}
\author{Peter Kagey}
\date{\today}

\maketitle

\begin{abstract}
  This proof without words demonstrates that there are $\binom{n+2}{4}$
  equilateral triangles in the regular $n$-vertices-per-side triangular grid by
  describing a map from four-element subsets of $\{1,2, \dots, n+2\}$
  into the set of equilateral triangles in this grid.
  Specifically, we illustrate the triangle that corresponds to the
  subset $\{4,5,8,11\}$ under this bijection when $n = 10$.
\end{abstract}

\begin{theorem}
  There are exactly $\binom{n+2}{4}$ equilateral triangles with vertices in a
  triangular region of the regular triangular grid with $n > 2$ vertices on
  each side.
  \end{theorem}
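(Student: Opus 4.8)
The plan is to realize the bijection advertised in the abstract as a composition of three elementary steps, each admitting a picture; together these constitute the promised ``proof without words.'' \emph{Step 1: bounding triangles.} Every equilateral triangle $\Delta$ with vertices in the grid is contained in a unique smallest ``upward'' lattice triangle $T$ --- apex up, sides parallel to the grid --- namely the intersection of all upward lattice triangles that contain $\Delta$; this intersection is again an upward lattice triangle, as one sees by assigning the grid points coordinates $(a,b,c)$ with $a,b,c \ge 0$ and $a+b+c$ fixed, under which the upward triangles are exactly the sets $\{a \ge p,\ b \ge q,\ c \ge r\}$. If $\Delta$ is itself upward then $T = \Delta$. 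Otherwise $\Delta$ meets each of the three sides of $T$ --- were it to avoid one, $T$ could be shrunk --- and, having only three vertices, it has exactly one on each side and none at a corner of $T$: a vertex of $\Delta$ at a corner would have its two edges, $60^\circ$ apart, trapped inside the $60^\circ$ corner wedge, forcing those edges to lie along the sides of $T$ and hence $\Delta$ to be the upward corner sub-triangle, contrary to assumption. So counting equilateral triangles reduces to counting, for each upward lattice triangle $T$, the equilateral triangles inscribed in $T$ with one vertex strictly interior to each side, and then adding $T$ itself.

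\emph{Step 2: inscribed triangles (the crux).} The main lemma is the classical fact that an equilateral triangle inscribed in an equilateral triangle $T$ of side $k$, with one vertex on each side, has its vertices at a common ``twist'' distance $j$ from the three corners, taken cyclically. I would prove it by the standard ``three congruent corner triangles'' angle chase: each of the three triangles cut off at the corners of $T$ has a $60^\circ$ angle there; the $60^\circ$ angles of $\Delta$ at the three touch-points force the remaining two angles of each corner triangle to agree in pairs around $T$, so the three corner triangles are similar; and since the three sides of $\Delta$ opposite the $60^\circ$ corners are equal, similarity upgrades to congruence --- which is precisely the equal-offsets statement. As $j$ is an integer with $0 \le j \le k$, and both $j = 0$ and $j = k$ return $T$, the triangle $T$ contains exactly $k$ inscribed equilateral lattice triangles, one for each $j \in \{0,1,\dots,k-1\}$, with $j = 0$ being $T$ itself and $j$ versus $k - j$ mirror images. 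Hence an equilateral triangle in the grid is precisely the data of an upward lattice triangle $T$ in the region together with an integer $j$ with $0 \le j \le \mathrm{side}(T) - 1$.

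\emph{Step 3: coordinates, then stars and bars.} Write $m = n - 1$ for the side length of the region in unit edges. An upward lattice sub-triangle of side $k$ is determined by the triple $(p,q,r)$ of its distances to the three sides of the region, with $p,q,r \ge 0$ and $p + q + r = m - k$; thus an equilateral triangle corresponds to a four-tuple $(p,q,r,j)$ with $p,q,r \ge 0$, $0 \le j \le k - 1$, and $p + q + r = m - k$. Introducing the slack $s = k - 1 - j \ge 0$ turns this into a weak composition $p + q + r + j + s = m - 1 = n - 2$ into five nonnegative parts, and conversely $k = j + s + 1$ recovers $k$ and then all of $p,q,r,j$, so the correspondence is a bijection. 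Finally, reading the five parts (in a fixed order) as the gap sequence of a four-element subset of $\{1,2,\dots,n+2\}$ --- the textbook stars-and-bars bijection --- completes the chain and delivers the count $\binom{n+2}{4}$. Running the subset $\{4,5,8,11\}$ for $n = 10$, whose gap vector is $(3,0,2,2,1)$ and sums to $n - 2 = 8$, backwards through Steps 3, 2, and 1 reconstructs the triangle drawn in the figure: an upward lattice triangle positioned by three of the parts and bearing the tilted inscribed triangle whose twist is recorded by the other two.

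\emph{Where the work is.} Everything apart from the lemma of Step 2 --- the assertion that there is no ``skew'' inscribed equilateral triangle --- is bookkeeping, once one also records the small well-definedness checks in Step 1 (that the bounding triangle is genuinely unique and the resulting map injective, the corner case above included). Granting the lemma, Steps 1 and 3 are routine, and they are exactly what the single figure is built to make visible.
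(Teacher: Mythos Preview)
Your argument is correct and implements exactly the bijection the paper advertises: a four-element subset of $\{1,\dots,n+2\}$ is decoded into a position for an upward bounding triangle together with a twist parameter, and the stars-and-bars count gives $\binom{n+2}{4}$. The only difference is one of presentation rather than mathematics: the paper's proof is literally two pictures and no text, leaving the reader to extract from the illustration of $\{4,5,8,11\}$ the three steps you have written out --- bounding triangle, inscribed twist, gap encoding. What you have done, in effect, is supply the words that the proof without words deliberately omits; the underlying map is the same, though the paper's pictorial encoding may assign the five gaps to $(p,q,r,j,s)$ in a different fixed order than the one you chose.
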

  \begin{proof}
  \noindent
  \begin{center}
    \includegraphics[scale=1]{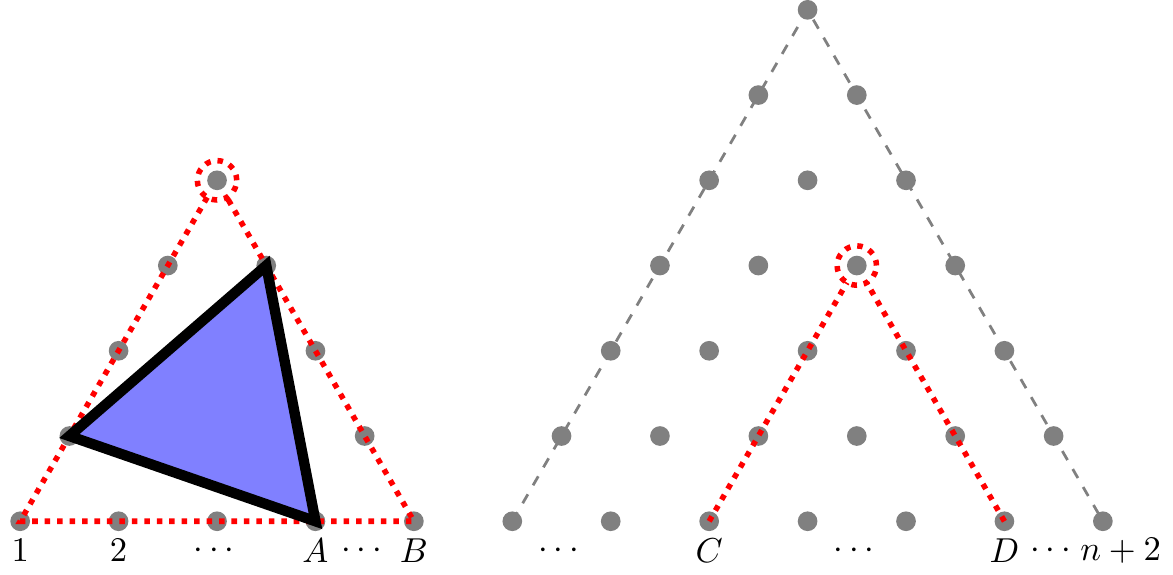}
    \\
    \includegraphics[scale=1]{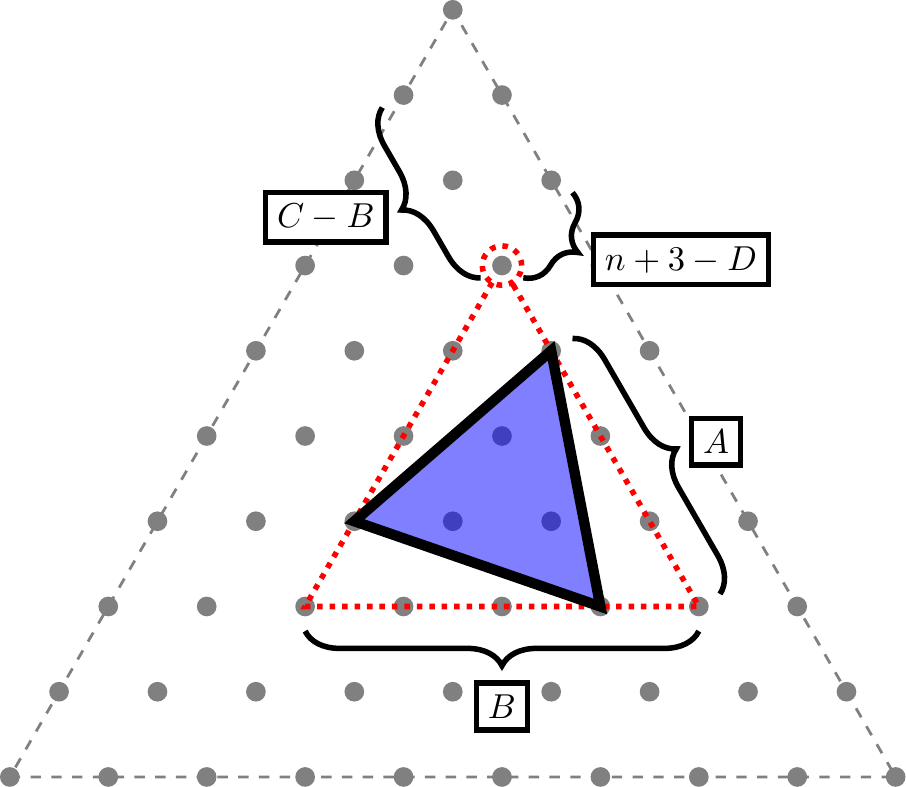}
  \end{center}
  \end{proof}

  \nocite{*}
  \printbibliography
\end{document}